\newtheorem{cor}{Corollary}
\newtheorem{prp}{Proposition}
\newtheorem{rem}{Remark}
\title{On Powers of Some Power Series}
\author{{Milan Janji\'c}}%
\date{}
\begin{document}
\maketitle
\begin{center}
Department of Mathematics and Informatics\\
 University of Banja Luka\\
78000 Banja Luka, Republic of Srpska, BA\\
email: agnus@blic.net
\end{center}

\begin{abstract}
We investigate some relationships between powers of powers series
and compositions of positive integers. We show that the compositions  may be interpreted in terms of  powers of some power series, over arbitrary commutative ring.  As  consequences,
several closed formulas for the compositions as well as for the
generalized compositions with a fixed number of parts are derived.
Some results on compositions obtained in some recent papers are
consequences of these formulas.
\end{abstract}
Keywords: Compositions, Power Series.
\section{Introduction} Let $R$ be a commutative ring with $1,$ and
let $\mathbf r=(r_0,r_1,\ldots)$ be any sequence of elements of
$R.$ Let $k,n$ be nonnegative integers such that $k\leq n.$  We
define the finite sequence $C^{(\mathbf r)}(n,k),\;(k=0,1,\ldots,n)$ of elements of $R$ in the following
way:
\[C^{(\mathbf r)}(n,1)=r_{n-1},C^{(\mathbf r)}(0,0)=1,C^{(\mathbf r)}(n,0)=C^{(\mathbf r)}(n,k)=0,(0<n<k),\]
and,
\begin{equation}\label{kgk}C^{(\mathbf r)}(n,k)=\sum_{i=0}^{n-k}r_{i}C^{(\mathbf r)}(n-i-1,k-1),(1<k\leq n).\end{equation}

We also define the sequence $C^{(\mathbf r)}(n),\;(n=0,1,2,\ldots)$ to be
 \begin{equation}\label{svegk}C^{(\mathbf r)}(0)=1,\;C^{(\mathbf r)}(n)=\sum_{k=1}^nC^{(\mathbf r)}(n,k),\;(n\geq 1).\end{equation}

For a fixed positive integer $k$ we let ${\mathbf r,k\choose
n},\;(n=0,1,\ldots)$  denote the sequence which a generating
function is $\left(\sum_{i=0}^\infty r_ix^i\right)^k.$ Thus,
\begin{equation}\label{gf}\left(\sum_{i=0}^\infty
r_ix^i\right)^k=\sum_{n=0}^\infty {\mathbf r,k\choose
n}x^n.\end{equation}
When the $b$'s are nonnegative integers, then $C^{(\mathbf r)}(n,k)$ is the number of generalized compositions of $n$ into $k$ parts
\cite{mil2}, Theorem 1). Also,  It is proved in \cite{mil1} that, in this case, $C^{(\mathbf r)}(n)$ is the number of all generalized compositions of $n.$

In Section 2 we derive a recurrence equation for $C^{(\mathbf r)}(n).$ Further we find a generating function for $C^{(\mathbf r)}(n,k).$ In Section 3 we apply the results obtained in Section 2, on usual compositions. Finally, in Section 4 we derive some results for generalized compositions.
\section{A generating function}
\begin{prp} Let $n$ be a positive integer, and let $\mathbf r=(r_0,r_1,\ldots)$ be a sequence in $R.$  Then
 \begin{equation}\label{allr}C^{(\mathbf r)}(n)=\sum_{i=0}^{n-1}r_{i}C^{(\mathbf r)}(n-i-1).\end{equation}
\end{prp}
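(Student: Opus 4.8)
The plan is to reduce everything to the defining recurrence (\ref{kgk}) together with a bookkeeping step on the boundary values. First I would observe that the recurrence (\ref{kgk}) in fact remains valid for $k=1$: since $C^{(\mathbf r)}(m,0)=0$ for $m>0$ and $C^{(\mathbf r)}(0,0)=1$, the sum $\sum_{i=0}^{n-1}r_iC^{(\mathbf r)}(n-i-1,0)$ collapses to the single surviving term $i=n-1$, which gives $r_{n-1}=C^{(\mathbf r)}(n,1)$. Hence for every $k$ with $1\le k\le n$ we may write
\[
C^{(\mathbf r)}(n,k)=\sum_{i=0}^{n-k}r_{i}\,C^{(\mathbf r)}(n-i-1,k-1).
\]

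Next I would sum this identity over $k=1,\ldots,n$ using the definition (\ref{svegk}) of $C^{(\mathbf r)}(n)$, obtaining a double sum $\sum_{k=1}^{n}\sum_{i=0}^{n-k}r_i\,C^{(\mathbf r)}(n-i-1,k-1)$. The key manipulation is to interchange the two summations: the constraints $1\le k$ and $i\le n-k$ are equivalent to $0\le i\le n-1$ together with $1\le k\le n-i$, so the double sum becomes $\sum_{i=0}^{n-1}r_i\sum_{k=1}^{n-i}C^{(\mathbf r)}(n-i-1,k-1)$. Shifting the inner index $j=k-1$ turns the inner sum into $\sum_{j=0}^{n-i-1}C^{(\mathbf r)}(n-i-1,j)$.

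Finally I would identify this inner sum with $C^{(\mathbf r)}(n-i-1)$. Writing $m=n-i-1$, for $m\ge 1$ we have $C^{(\mathbf r)}(m,0)=0$, so $\sum_{j=0}^{m}C^{(\mathbf r)}(m,j)=\sum_{j=1}^{m}C^{(\mathbf r)}(m,j)=C^{(\mathbf r)}(m)$ by (\ref{svegk}), and for $m=0$ the sum is $C^{(\mathbf r)}(0,0)=1=C^{(\mathbf r)}(0)$. Substituting back yields $C^{(\mathbf r)}(n)=\sum_{i=0}^{n-1}r_i\,C^{(\mathbf r)}(n-i-1)$, which is (\ref{allr}). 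I expect the only delicate point to be the careful treatment of the boundary conventions ($C^{(\mathbf r)}(0,0)=1$, $C^{(\mathbf r)}(m,0)=0$ for $m>0$) so that the extension of (\ref{kgk}) to $k=1$ and the collapse of $\sum_{j=0}^m$ to $C^{(\mathbf r)}(m)$ are both justified; the summation interchange itself is routine since all sums are finite.
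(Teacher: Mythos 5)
Your proof is correct and follows essentially the same route as the paper: sum the recurrence (\ref{kgk}) over $k$, interchange the two summations, and use the boundary conventions $C^{(\mathbf r)}(0,0)=1$, $C^{(\mathbf r)}(m,0)=0$ for $m>0$ to identify the inner sum with $C^{(\mathbf r)}(n-i-1)$ via (\ref{svegk}). The only difference is that the paper phrases this as an induction on $n$, which your direct computation shows is not actually needed (the step the paper calls ``the induction hypothesis'' is really just the definition (\ref{svegk})), and your explicit extension of (\ref{kgk}) to $k=1$ tidies up a boundary case the paper handles implicitly.
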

\begin{proof} We use induction on $n.$ For $n=1$ the assertion is clearly true. Suppose that it is true for $k<n.$
We have
\[C^{(\mathbf r)}(n)=\sum_{k=1}^nC^{(\mathbf r)}(n,k)=\sum_{k=1}^n\sum_{i=0}^{n-k}r_{i}C^{(\mathbf r)}(n-i-1,k-1).\]
Changing the order of summation we obtain
\[C^{(\mathbf r)}(n)=r_{n-1}+\sum_{i=0}^{n-2}\sum_{k=1}^{n-i}r_{i}C^{(\mathbf r)}(n-i-1,k-1).\]
Taking into account that $C^{(\mathbf r)}(m,0)=0$ if $m>0,$ and then applying the induction hypothesis we get
\[C^{(\mathbf r)}(n)=r_{n-1}+\sum_{i=0}^{n-2}r_iC^{(\mathbf r)}(n-i-1),\] that is
\[C^{(\mathbf r)}(n)=\sum_{i=0}^{n-1}r_iC^{(\mathbf r)}(n-i-1),\] and the proposition is true.
\end{proof}

Next, for a positive integer $k,$ by  ${\mathbf r,k\choose
n},\;(n=0,1,\ldots)$ we denote the sequence which a generating
function is
\begin{equation}\label{gf0}g(x,k)=\left(\sum_{i=0}^\infty
r_{i}x^i\right)^k.\end{equation}

Hence,
\begin{equation}\label{oz}\left(\sum_{i=0}^\infty r_{i}x^i\right)^k=\sum_{n=0}^\infty{\mathbf r,k\choose n}x^n.\end{equation}
Thus,
\begin{equation}\label{kv}{\mathbf r,k\choose n}=\sum_{i_1+i_2+\cdots+i_k=n}r_{i_1}r_{i_2}\cdots r_{i_k},\end{equation}
where the sum is taken over $i_t\geq 0,\;(i=1,2,\ldots,k).$

\begin{prp}\label{pp2} Let $n,k$ be nonnegative integers such that $k\leq n.$ Then,
\begin{equation}\label{t1} C^{(\mathbf r)}(n,k)={\mathbf r,k\choose n-k}.\end{equation}
\end{prp}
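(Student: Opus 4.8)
The plan is to prove \eqref{t1} by induction on $k$, matching the recursive definition of $C^{(\mathbf r)}(n,k)$ in \eqref{kgk} against the convolution structure of the coefficients ${\mathbf r,k\choose n}$ coming from \eqref{oz}. First I would settle the base case $k=1$: by definition $C^{(\mathbf r)}(n,1)=r_{n-1}$, while $g(x,1)=\sum_i r_i x^i$ gives ${\mathbf r,1\choose n-1}=r_{n-1}$, so \eqref{t1} holds. I would also note the degenerate cases: for $k=0$ we have $C^{(\mathbf r)}(0,0)=1={\mathbf r,0\choose 0}$ (the empty product), and $C^{(\mathbf r)}(n,0)=0$ for $n>0$, consistent with $g(x,0)=1$.

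For the inductive step, assume \eqref{t1} holds for $k-1$, i.e. $C^{(\mathbf r)}(n',k-1)={\mathbf r,k-1\choose n'-(k-1)}$ for all relevant $n'$. Starting from \eqref{kgk},
\[
C^{(\mathbf r)}(n,k)=\sum_{i=0}^{n-k}r_i\,C^{(\mathbf r)}(n-i-1,k-1)
=\sum_{i=0}^{n-k}r_i\,{\mathbf r,k-1\choose n-i-1-(k-1)}
=\sum_{i=0}^{n-k}r_i\,{\mathbf r,k-1\choose n-k-i}.
\]
The right-hand side is exactly the coefficient of $x^{n-k}$ in the product $\left(\sum_i r_i x^i\right)\cdot\left(\sum_i r_i x^i\right)^{k-1}=\left(\sum_i r_i x^i\right)^k=g(x,k)$, i.e. it equals ${\mathbf r,k\choose n-k}$ by \eqref{oz}. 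This closes the induction.

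The only point needing a little care — the main (minor) obstacle — is bookkeeping the summation range: in \eqref{kgk} the index $i$ runs from $0$ to $n-k$, and I must check this coincides with the range of nonzero terms in the Cauchy product that produces the coefficient of $x^{n-k}$. Since ${\mathbf r,k-1\choose m}$ is only defined (nonzero) for $m\ge 0$, the constraint $n-k-i\ge 0$ forces $i\le n-k$, which matches exactly; and the convolution $\sum_{i+j=n-k} r_i\,{\mathbf r,k-1\choose j}$ has precisely these terms. One should also observe that the recurrence \eqref{kgk} is stated for $1<k\le n$, so the induction is run over $k\ge 2$ with the base case $k=1$ handled separately as above; the boundary values $C^{(\mathbf r)}(n,k)=0$ for $0<n<k$ agree with ${\mathbf r,k\choose n-k}$ being $0$ (negative index), keeping the identity formally valid throughout.
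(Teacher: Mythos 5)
Your proof is correct and takes essentially the same route as the paper's: induction on $k$ with base case $k=1$, using the coefficient of $x^{n-k}$ in the factorization $\left(\sum_i r_ix^i\right)^k=\left(\sum_i r_ix^i\right)\cdot\left(\sum_i r_ix^i\right)^{k-1}$ to match the recurrence \eqref{kgk}. The only difference is cosmetic: you substitute the induction hypothesis into \eqref{kgk} and recognize the Cauchy product, while the paper extracts the convolution identity from the product first and then applies the hypothesis.
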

\begin{proof}
For $n=k$ we have $C^{(\mathbf r)}(k,k)={\mathbf r,0\choose
k}=r_0^k,$ and the assertion is true. Assume that $k<n.$ We shall
prove  the proposition  by induction on $k.$
\begin{equation}\label{t2} C^{(\mathbf r)}(n,k)={\mathbf r,k\choose n-k}.\end{equation}

For $k=1$ equation (\ref{oz}) takes the form:
\[r_0+r_1x+\cdots={\mathbf r,1\choose 0}+{\mathbf r,1\choose 1}x+\cdots.\]
Since $C^{(\mathbf r)}(n,1)=r_{n-1},$ we have
\[C^{(\mathbf r)}(n,1)={\mathbf r,1\choose n-1},\;(1\leq n),\] and the assertion is true for $k=1.$

Assume that $k>1,$ and that  the
assertion is true for $k-1.$
We obviously have
\[\left(\sum_{i=0}^{\infty}r_{i}x^i\right)^k=\left(\sum_{i=0}^{\infty}r_{i}x^i\right)\cdot\left( \sum_{i=0}^{\infty}r_{i}x^i\right)^{k-1}.\]
Using (\ref{oz}) we may write this  equation in the form:
\[\sum_{i=0}^{\infty}{k,\mathbf r\choose i}x^i=\sum_{i=0}^{\infty}r_ix^i\cdot\sum_{i=0}^{\infty}{\mathbf r,k-1\choose i}x^i,\] so that,
\[\sum_{i=0}^{\infty}{\mathbf r,k\choose i}x^i=\sum_{i=0}^{\infty}\sum_{j=0}^ir_j{\mathbf r,k-1\choose i-j}x^i.\]

By comparing  the terms of $x^{n-k},$ in the preceding equation,  we obtain
\[{k,\mathbf r\choose n-k}=\sum_{j=0}^{n-k}r_j{\mathbf r,k-1\choose n-k-j}.\]
Using the induction hypothesis we obtain
\[{k,\mathbf r\choose n-k}=\sum_{j=0}^{n-k}r_{j}C^{(\mathbf r)}(n-j-1,k-1),\]
and the assertion follows from (\ref{kgk}).
\end{proof}
As an immediate consequence we have
 \begin{equation}\label{fb}C^{(\mathbf r)}(n)=\sum_{k=1}^n{\mathbf r,k\choose n-k}.\end{equation}

\section{Some results on  compositions}
In this section we assume that each $b_i$ is either $1$ or $0.$
It is proved in \cite{mil3} that  then $C^{(\mathbf r)}(n,k)$ counts the number of compositions of $n$ into $k$ parts, all of which are in the set $\{i:b_i=1\}.$ 
Also, in this case the generating function $g(x,k)$ is similar to the function defined in \cite{h2}. 
The number $C^{(\mathbf r)}(n)$ counts the number of all such compositions.
\begin{prp}\label{pp1}
 \begin{enumerate}
 \item[1.] If $r_i=1,\;(i=0,1,\ldots)$, then ${\mathbf r,k\choose n}$ count the number of nonnegative solutions of the Diophant equation \[i_1+i_2+\cdots+i_k=n.\] Hence,
\[{\mathbf r,k\choose n}={n+k-1\choose n}.\]

 \item[2.] Assume that there is a positive number $m$ such that  $r_0=r_1=\ldots=r_{r-1}=1,\;r_i=0,\;(i>m).$
 Then ${\mathbf r,k\choose n}$ counts the number of nonnegative  solutions of the Diophant equation
\[i_1+i_2+\cdots+i_k=n,\] in which each term is $\leq m.$
\end{enumerate}
\end{prp}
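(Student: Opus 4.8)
The plan is to derive both parts directly from the product formula (\ref{kv}), which expresses $\binom{\mathbf r,k}{n}$ as $\sum r_{i_1}r_{i_2}\cdots r_{i_k}$, the sum running over all $k$-tuples of nonnegative integers $i_1,\ldots,i_k$ with $i_1+\cdots+i_k=n$. In both parts the entries $r_i$ take only the values $0$ and $1$, so each summand is itself $0$ or $1$, and the whole sum simply counts those tuples whose associated product equals $1$. Everything then reduces to identifying which tuples contribute.

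For part 1, I would set $r_i=1$ for all $i\ge 0$. Then every product $r_{i_1}\cdots r_{i_k}$ equals $1$, so $\binom{\mathbf r,k}{n}$ is exactly the number of $k$-tuples $(i_1,\ldots,i_k)$ of nonnegative integers with $i_1+\cdots+i_k=n$, i.e.\ the number of nonnegative solutions of the displayed Diophantine equation. To reach the closed form I would invoke the classical stars-and-bars count: such weak compositions of $n$ into $k$ parts are in bijection with the choices of $k-1$ separators among $n+k-1$ positions, which gives $\binom{n+k-1}{k-1}=\binom{n+k-1}{n}$. Alternatively, with $r_i=1$ the series $\sum r_ix^i$ is $1/(1-x)$, so $g(x,k)=(1-x)^{-k}=\sum_n\binom{n+k-1}{n}x^n$ by the binomial series; I expect the combinatorial route to be the shorter one.

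For part 2, I would take $r_0=r_1=\cdots=r_m=1$ and $r_i=0$ for $i>m$. In the sum (\ref{kv}) a term $r_{i_1}\cdots r_{i_k}$ is $1$ precisely when every index satisfies $i_t\le m$, and it vanishes as soon as some $i_t$ exceeds $m$. Hence the contributing tuples are exactly the nonnegative solutions of $i_1+\cdots+i_k=n$ in which each $i_t\le m$, and $\binom{\mathbf r,k}{n}$ counts precisely these. I do not foresee a genuine obstacle here; the only routine points are verifying the stars-and-bars bijection in part 1 and the identity $\binom{n+k-1}{k-1}=\binom{n+k-1}{n}$. It may also be worth adjusting the hypothesis of part 2 to read $r_0=r_1=\cdots=r_m=1$, so that the bound $i_t\le m$ matches the support of the sequence $(r_i)$.
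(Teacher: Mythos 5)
Your proposal is correct and follows essentially the same route as the paper: both parts are read off from the expansion (\ref{kv}), with the closed form in part 1 supplied by the standard stars-and-bars count of nonnegative solutions of $i_1+\cdots+i_k=n$. Your remark that the hypothesis of part 2 should read $r_0=r_1=\cdots=r_m=1$ (fixing the paper's typo $r_{r-1}$) is also well taken.
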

\begin{proof}
1. The first assertion follows from (\ref{kv}). The second assertion follows from the well-known fact that the  Diophant equation in 1. has exactly  ${n+k-1\choose n}$ solutions.

2. The assertion follows from (\ref{kv}).
\end{proof}
In the next result we prove that, when each part of a composition belong to a fixed finite set, containing $1,$ then the number of compositions is a sum of multinomial coefficients. The case when $1$ does not belong to the set is easily reduced to this case.     
\begin{prp} Assume that $r_i=1$ if $i\in \{1,m_1,\ldots,m_s\},$ and $r_i=0$ otherwise. Then
\[C^{(\mathbf r)}(n,k)=\sum_{(i_0,i_1,\ldots,i_s)}{k\choose i_0,i_1,\ldots,i_s},\]
where the  sum on the right-hand side is taken over all nonnegative solutions of the Diophant equations
\[i_0+i_1+\cdots+i_s=k,\;(m_1-1)i_1+(m_2-1)i_2+\cdots+(m_s-1)i_s=n-k.\]
\end{prp}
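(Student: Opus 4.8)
The plan is to reduce the claim to extracting a single coefficient from an explicit polynomial raised to the $k$-th power, and then to read that coefficient off with the multinomial theorem. By Proposition~\ref{pp2} we have $C^{(\mathbf r)}(n,k)={\mathbf r,k\choose n-k}$, and by \eqref{oz} this is the coefficient of $x^{n-k}$ in $g(x,k)=\left(\sum_{i\ge 0}r_i x^i\right)^k$. For the prescribed sequence $\mathbf r$ the inner series is a finite sum of monomials, one for each admissible part, so $g(x,k)$ becomes the $k$-th power of a polynomial supported on the admissible part-set; the whole problem is to isolate one coefficient of that power.

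Since the part $1$ is admissible, after an appropriate normalization the base polynomial may be written as $x^{c}\bigl(1+x^{m_1-1}+\cdots+x^{m_s-1}\bigr)$ for a suitable $c\in\{0,1\}$, the exponents $m_j-1$ being the excess of $m_j$ over $1$. Then $g(x,k)=x^{ck}\bigl(1+x^{m_1-1}+\cdots+x^{m_s-1}\bigr)^{k}$, and by the multinomial theorem
\[
\bigl(1+x^{m_1-1}+\cdots+x^{m_s-1}\bigr)^{k}=\sum_{i_0+i_1+\cdots+i_s=k}\binom{k}{i_0,i_1,\dots,i_s}\,x^{(m_1-1)i_1+\cdots+(m_s-1)i_s}.
\]
Hence the coefficient we want is the sum of the multinomial coefficients $\binom{k}{i_0,\dots,i_s}$ over all nonnegative $(i_0,\dots,i_s)$ with $i_0+\cdots+i_s=k$ for which the exponent $(m_1-1)i_1+\cdots+(m_s-1)i_s$ equals the residual value, which must come out to $n-k$; this is precisely the asserted formula. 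The remark preceding the statement, that the case $1\notin\{\text{parts}\}$ reduces to this one, then follows by shifting every part down by $1$ and $n$ down by $k$.

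I expect the only genuinely delicate point to be the bookkeeping of the index shifts: one has to reconcile the shift between the subscripts of $\mathbf r$ and the actual part sizes (recall $C^{(\mathbf r)}(n,1)=r_{n-1}$), the subtraction of $k$ coming from Proposition~\ref{pp2}, and the power of $x$ factored out of the base polynomial, so that the two Diophantine equations emerge exactly as displayed rather than shifted by a multiple of $k$. Once those shifts are pinned down, the proof is just Proposition~\ref{pp2} followed by the multinomial theorem.
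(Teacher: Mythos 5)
Your proposal is correct and is essentially the paper's own argument: reduce $C^{(\mathbf r)}(n,k)$ to the coefficient of $x^{n-k}$ in $g(x,k)$ via Proposition~\ref{pp2}, then expand by the multinomial theorem and collect the terms whose exponent is $n-k$. The one point you left hedged, the normalization $c\in\{0,1\}$, resolves to $c=0$ under the paper's convention that a part $p$ corresponds to $r_{p-1}$ (so the hypothesis is to be read as $r_0=r_{m_1-1}=\cdots=r_{m_s-1}=1$, giving $g(x,k)=\left(1+x^{m_1-1}+\cdots+x^{m_s-1}\right)^k$ exactly as in the paper's proof), which is why the residual exponent is $n-k$ rather than $n-2k$; the apparent ambiguity is an off-by-one slip in the statement of the proposition itself, not a defect of your argument.
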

\begin{proof}
In this case,  the formula (\ref{fb}) becomes
\[g(x,k)=\left(1+x^{m_1-1}+x^{m_2-1}+\cdots+x^{m_s-1}\right)^k.\]
Using the multinomial formula yields
\[g(x,k)=\sum_{(i_0,i_1,\ldots,i_s)}{k\choose i_0,i_1,\ldots,i_s}x^{(m_1-1)i_1+\cdots(m_s-1)i_s},\]
where the sum oh the right-hand side is taken over all nonnegative solutions of the Diophant equation
\[i_0+i_1+\cdots+i_s=k.\]
and the proposition follows from Proposition \ref{pp2}.
\end{proof}
The simplest case is $s=1,\;r_1=2.$
\begin{cor} The number of the compositions of $n$ with $k$ parts, each equals either $1$ or $2$ is ${k\choose n-k}.$
\end{cor}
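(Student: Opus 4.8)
The plan is to obtain the corollary directly as the special case $s=1$, $m_1=2$ of the preceding Proposition; no fresh argument is needed. For these values the admissible set of part sizes is $\{1,m_1\}=\{1,2\}$, so $C^{(\mathbf r)}(n,k)$ is exactly the number of compositions of $n$ into $k$ parts each equal to $1$ or $2$, which is what the corollary counts.

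First I would carry out the substitution in the formula of the preceding Proposition. With $s=1$ the two Diophantine conditions there become $i_0+i_1=k$ and $(m_1-1)i_1=n-k$. Since $m_1-1=1$, the second equation forces $i_1=n-k$, and then $i_0=k-i_1=2k-n$; in particular the sum over solutions degenerates to a single term. Hence $C^{(\mathbf r)}(n,k)={k\choose i_0,i_1}={k\choose 2k-n,\,n-k}=\frac{k!}{(2k-n)!\,(n-k)!}={k\choose n-k}$, which is the asserted value.

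As a cross-check one can argue more directly via generating functions: in this case the polynomial appearing in the proof of the preceding Proposition is $\left(1+x^{m_1-1}\right)^k=(1+x)^k$, and extracting the coefficient of $x^{n-k}$ (as in Proposition~\ref{pp2}) yields at once $C^{(\mathbf r)}(n,k)={k\choose n-k}$; equivalently, a composition of $n$ into $k$ parts consisting of $a$ twos and $b$ ones must satisfy $a+b=k$ and $2a+b=n$, so $a=n-k$ and the number of orderings of these parts is ${k\choose n-k}$. There is essentially no obstacle here: the only point worth a remark is the boundary behaviour, namely that for $n<k$ or $n>2k$ no such composition exists and ${k\choose n-k}=0$ accordingly, while $n=k$ (all ones) and $n=2k$ (all twos) each contribute the single composition counted by ${k\choose 0}=1$.
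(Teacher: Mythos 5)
Your proposal is correct and follows the same route as the paper, which obtains the corollary precisely as the specialization $s=1$, $m_1=2$ of the preceding proposition (the paper's remark ``The simplest case is $s=1,\;r_1=2$'' is exactly this, with a notational slip for $m_1$), where the two Diophantine conditions collapse to the single solution $i_1=n-k$, $i_0=2k-n$, giving ${k\choose n-k}$. Your added generating-function and direct-counting cross-checks and the boundary discussion are fine but not needed.
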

Next, suppose that $s=1,\;r_1=m>2.$ Then,
\[C^{(\mathbf r)}(n,k)=\sum_{(i_0,i_1)}{k\choose i_0,i_1},\]
where $i_0+i_1=k,\;(m-1)i_1=n-k.$ The last equation has a solution if and only if $m-1$ divides $n-k.$

We thus obtain the closed formulas for the number of compositions into $k$ parts and for the number of all compositions.
\begin{cor} Let $n,k,m$ be positive integers with $m>1$. For the number $C^{(\mathbf r)}(n,k)$ of compositions of $n$ with $k$ parts, all of which are either $1$ or $m$ we have is
\[C^{(\mathbf r)}(n,k)={k\choose \frac{n-k}{m-1}},\] if $m-1$ divides $n-k,$ and $0$ otherwise.
For the number of all such compositions we have
\[C^{(\mathbf r)}(n)=\sum_{j=0}^{\lfloor\frac{n}{m}\rfloor}{n-(m-1)j\choose j}. \]
\end{cor}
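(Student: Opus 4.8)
The plan is to specialize the earlier machinery to the sequence $\mathbf r$ with $r_0=r_1=1$, $r_m=1$ and all other entries zero, i.e.\ the case $s=1$, $m_1=m$ of the preceding proposition. For the first formula, I would simply invoke the preceding proposition (or Proposition~\ref{pp2} together with the computation $g(x,k)=(1+x^{m-1})^k$). The Diophantine system there reduces to $i_0+i_1=k$ and $(m-1)i_1=n-k$; the second equation is solvable in nonnegative integers precisely when $(m-1)\mid(n-k)$, in which case $i_1=\frac{n-k}{m-1}$ and $i_0=k-i_1$ are forced, so the multinomial sum collapses to the single term $\binom{k}{i_0,i_1}=\binom{k}{\frac{n-k}{m-1}}$. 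When $(m-1)\nmid(n-k)$ the system has no solution and the sum is empty, giving $0$.

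For the second formula, I would start from $C^{(\mathbf r)}(n)=\sum_{k=1}^n C^{(\mathbf r)}(n,k)$ (definition~(\ref{svegk})) and substitute the closed form just obtained: only those $k$ with $(m-1)\mid(n-k)$ contribute. Writing $n-k=(m-1)j$ with $j\geq 0$, i.e.\ $k=n-(m-1)j$, the term becomes $\binom{n-(m-1)j}{j}$. The range of $j$ is governed by the constraints $1\le k\le n$: from $k\le n$ we get $j\ge 0$, and from $k\ge 1$ (equivalently $k\ge 0$, since the $k=0$ term vanishes or corresponds to $n=(m-1)j$ which can be absorbed harmlessly by the binomial convention) we get $(m-1)j\le n$, but the sharper bound comes from requiring $j\le k=n-(m-1)j$, i.e.\ $mj\le n$, hence $j\le\lfloor n/m\rfloor$. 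This yields exactly $C^{(\mathbf r)}(n)=\sum_{j=0}^{\lfloor n/m\rfloor}\binom{n-(m-1)j}{j}$.

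Alternatively, and perhaps more cleanly, one can use~(\ref{fb}): $C^{(\mathbf r)}(n)=\sum_{k=1}^n{\mathbf r,k\choose n-k}$, where ${\mathbf r,k\choose n-k}$ is the coefficient of $x^{n-k}$ in $(1+x^{m-1})^k=\sum_{j}\binom{k}{j}x^{(m-1)j}$. Thus ${\mathbf r,k\choose n-k}=\binom{k}{j}$ when $n-k=(m-1)j$ and $0$ otherwise, and re-indexing by $j$ (so $k=n-(m-1)j$) produces the same sum. I would present whichever of these two derivations is shortest to typeset.

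The only genuine point requiring care — the ``main obstacle,'' such as it is — is the bookkeeping on the summation range for $j$: one must check that the upper limit $\lfloor n/m\rfloor$ is precisely the largest $j$ for which the corresponding $k=n-(m-1)j$ still satisfies $j\le k$ (so that the binomial coefficient $\binom{n-(m-1)j}{j}$ is not forced to be zero), and confirm that no valid composition is lost or double-counted at the endpoints. Everything else is a direct substitution into results already proved, so the proof will be short.
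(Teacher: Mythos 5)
Your proposal is correct and follows essentially the same route as the paper: the first formula is exactly the paper's specialization $s=1$, $m_1=m$ of the preceding proposition (the Diophantine system $i_0+i_1=k$, $(m-1)i_1=n-k$ collapsing to a single multinomial term), and the second formula is the straightforward summation over $k$ via (\ref{svegk}) or (\ref{fb}) with the re-indexing $n-k=(m-1)j$, which the paper leaves implicit but you carry out correctly, including the bound $j\le\lfloor n/m\rfloor$. One trivial slip: under the paper's convention $C^{(\mathbf r)}(n,1)=r_{n-1}$, allowing parts $1$ and $m$ means $r_0=r_{m-1}=1$ and all other $r_i=0$ (not $r_0=r_1=r_m=1$), but since you immediately work with the correct generating function $\left(1+x^{m-1}\right)^k$, this does not affect the argument.
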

\begin{rem} This kind of compositions is investigated in \cite{c1}.
\end{rem}

\begin{prp} Let $n,m$ be positive integers, and let $r_i=1$ if $i=1(\mod m),$ and $r_i=0$ otherwise.
Then
\[C^{(\mathbf r)}(n,k)={\frac{n-k}{m}+k-1\choose\frac{n-k}{m}}, \mbox{ if }n=k(\mod m),\] and $C^{(\mathbf r)}(n,k) =0,$ otherwise.
\end{prp}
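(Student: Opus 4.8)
The plan is to reduce the statement, via Proposition \ref{pp2}, to reading off a single coefficient of an explicit power series. By (\ref{t1}) together with (\ref{oz}), the number $C^{(\mathbf r)}(n,k)$ equals the coefficient of $x^{n-k}$ in $g(x,k)=\left(\sum_{i\ge 0}r_ix^i\right)^k$, so it suffices to identify this series and then extract that one coefficient.

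First I would compute the base series. The indices $i$ with $r_i=1$ are exactly the nonnegative multiples of $m$, so, as a formal power series over $R$,
\[\sum_{i\ge 0}r_ix^i=\sum_{j\ge 0}x^{mj}=\frac{1}{1-x^m},\qquad\text{hence}\qquad g(x,k)=\frac{1}{(1-x^m)^k}.\]
Next I would expand $g(x,k)$. Substituting $x^m$ for $x$ in Proposition \ref{pp1}(1) — equivalently, invoking the generalized binomial series, whose coefficients $\binom{j+k-1}{j}$ are ordinary integers and hence make sense in any commutative ring — yields
\[g(x,k)=\sum_{j\ge 0}\binom{j+k-1}{j}x^{mj}.\]

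Finally I would extract the coefficient of $x^{n-k}$. Since only exponents divisible by $m$ occur on the right-hand side, the coefficient is $0$ unless $m\mid(n-k)$, i.e.\ unless $n\equiv k\pmod m$; and when $m\mid(n-k)$ the relevant term is the one with $j=\tfrac{n-k}{m}$, giving $C^{(\mathbf r)}(n,k)=\binom{(n-k)/m+k-1}{(n-k)/m}$, which is the claim. I do not anticipate a genuine obstacle here: the only points needing a little care are recognising the geometric series for $\sum_i r_ix^i$, keeping the substitution $x\mapsto x^m$ straight when matching coefficients, and observing that the ``otherwise $0$'' half of the statement is immediate because $g(x,k)$ is supported on multiples of $m$. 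As a sanity check one can also argue combinatorially: this case counts compositions of $n$ into $k$ parts each $\equiv 1\pmod m$, and writing each part as $1+m a_t$ with $a_t\ge 0$ turns the count into the number of nonnegative solutions of $a_1+\cdots+a_k=(n-k)/m$, which is $\binom{(n-k)/m+k-1}{(n-k)/m}$ by Proposition \ref{pp1}(1); but the power-series route is cleaner and valid over an arbitrary $R$.
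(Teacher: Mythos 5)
Your argument is correct and is essentially the paper's own proof: the paper likewise obtains the result by substituting $x\mapsto x^m$ in the identity $\left(\sum_{i\ge 0}x^i\right)^k=\sum_{n\ge 0}\binom{n+k-1}{n}x^n$ from Proposition \ref{pp1} and reading off the coefficient of $x^{n-k}$ via Proposition \ref{pp2}, exactly your computation with $g(x,k)=(1-x^m)^{-k}$. Note only that you silently replaced the literal hypothesis ``$r_i=1$ iff $i\equiv 1\pmod m$'' by ``$r_i=1$ iff $m\mid i$''; this is indeed the intended reading (the paper's index convention shifts part $i+1$ to coefficient $r_i$, as the subsequent corollary on odd parts and the paper's own substitution $x\mapsto x^m$ confirm), so your proof matches the proposition as meant, though it would be worth one line making that index shift explicit.
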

\begin{proof}
According to (\ref{pp1}), for the usual composition the formula (\ref{oz}) has the form
\begin{equation}\label{gf1}\left(\sum_{i=0}^\infty x^i\right)^k=\sum_{n=0}^\infty {n+k-1\choose n}x^n.\end{equation}
The proposition follows by replacing $x$ by $x^m.$
\end{proof}
\begin{cor} Let $n$ be a positive integer. Then,
\[F_n=\sum_{k}{\frac{n-k}{2}+k-1\choose\frac{n-k}{2}},\] where the sum is taken over $k,$  such that $k$ and $n$ are of the same parity.
\end{cor}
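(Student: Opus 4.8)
The plan is to obtain the identity in two strokes: first specialize the preceding Proposition to $m=2$ and feed it into the summation formula (\ref{fb}), which reduces the statement to the single assertion $C^{(\mathbf r)}(n)=F_{n}$; then prove this last equality with the help of Proposition~1.

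First I would put $m=2$ in the preceding Proposition. Then $C^{(\mathbf r)}(n,k)=\binom{\frac{n-k}{2}+k-1}{\frac{n-k}{2}}$ whenever $n\equiv k\pmod 2$, and $C^{(\mathbf r)}(n,k)=0$ otherwise. Substituting this into (\ref{fb}), that is into $C^{(\mathbf r)}(n)=\sum_{k=1}^{n}C^{(\mathbf r)}(n,k)$, and discarding the vanishing summands leaves
\[
C^{(\mathbf r)}(n)=\sum_{k}\binom{\tfrac{n-k}{2}+k-1}{\tfrac{n-k}{2}},
\]
the sum ranging over $k$ of the same parity as $n$. So it remains only to show $C^{(\mathbf r)}(n)=F_{n}$.

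For this I would use Proposition~1. As in the proof of the preceding Proposition, for $m=2$ one has $\sum_{i}r_{i}x^{i}=\sum_{i}x^{2i}$, so $r_{i}=1$ for even $i$ and $r_{i}=0$ for odd $i$; hence (\ref{allr}) reads $C^{(\mathbf r)}(n)=C^{(\mathbf r)}(n-1)+C^{(\mathbf r)}(n-3)+C^{(\mathbf r)}(n-5)+\cdots$, the sum breaking off at the last nonnegative index and with $C^{(\mathbf r)}(0)=1$. Writing the same relation with $n$ replaced by $n-2$ and subtracting, every term cancels but one, leaving $C^{(\mathbf r)}(n)=C^{(\mathbf r)}(n-1)+C^{(\mathbf r)}(n-2)$ for $n\ge 3$; together with $C^{(\mathbf r)}(1)=C^{(\mathbf r)}(2)=1$, read off from the definitions, this is the Fibonacci recurrence with the Fibonacci initial values, so $C^{(\mathbf r)}(n)=F_{n}$ for all $n\ge 1$. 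There are two equivalent ways to close the same step: the substitution $k=n-2j$ turns the displayed sum into $\sum_{j\ge 0}\binom{n-1-j}{j}$, the classical shallow-diagonal expression for $F_{n}$; or one observes that $C^{(\mathbf r)}(n)$ counts the compositions of $n$ into odd parts, of which there are known to be $F_{n}$.

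The argument is short and essentially mechanical once the specialization is done. The only point needing care is the telescoping step: one must keep track of the lower endpoints of the two sums so that the cancellation is exact and the base cases match $F_{1}=F_{2}=1$. That is a routine verification, and I foresee no real obstacle.
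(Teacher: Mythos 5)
Your proof is correct, but it closes the argument by a genuinely different route than the paper. The first step is the same in both: set $m=2$ in the preceding proposition (reading its hypothesis, as you rightly do, as $r_i=1$ for $i\equiv 0\pmod 2$ — this is what the stated formula and the proof by the substitution $x\mapsto x^m$ actually require, the ``$i=1\pmod m$'' in the statement being a slip) and sum over $k$; note that the summation you want is (\ref{svegk}) rather than (\ref{fb}), though the two are equivalent by Proposition \ref{pp2}. Where you diverge is in proving $C^{(\mathbf r)}(n)=F_n$: the paper simply observes that these $C^{(\mathbf r)}(n,k)$ count compositions of $n$ into $k$ odd parts and cites Corollary 26 of \cite{mil1} for the fact that the total number of compositions of $n$ into odd parts is $F_n$. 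You instead derive the Fibonacci recurrence internally from Proposition 1: with $r_i=1$ for even $i$, equation (\ref{allr}) gives $C^{(\mathbf r)}(n)=C^{(\mathbf r)}(n-1)+C^{(\mathbf r)}(n-3)+\cdots$, and subtracting the same relation at $n-2$ telescopes exactly to $C^{(\mathbf r)}(n)=C^{(\mathbf r)}(n-1)+C^{(\mathbf r)}(n-2)$ for $n\ge 3$, with $C^{(\mathbf r)}(1)=C^{(\mathbf r)}(2)=1$ checked from the definitions; your bookkeeping of the endpoints and base cases is sound. This buys self-containedness — no appeal to an unpublished external result — at the cost of a short computation, while the paper's route is quicker but imports the combinatorial fact from \cite{mil1}. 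Your alternative closing, substituting $k=n-2j$ to get the shallow-diagonal sum $\sum_{j\ge 0}\binom{n-1-j}{j}=F_n$, is also valid and bypasses $C^{(\mathbf r)}(n)$ altogether, though it trades one known identity for another.
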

\begin{proof}
For $m=2,$  the numbers $C^{(\mathbf r)}(n,k)$  from the preceding
proposition count the numbers of compositions with odd parts.
Hence, $C^{(\mathbf r)}(n,k)={\frac{n-k}{2}+k-1\choose k-1},$ if
$n$ and $k$ are of the same parity, and $C^{(\mathbf r)}(n,k)=0,$
otherwise. On the other hand, it is proved in Corollary 26, in \cite{mil1},
that the number of all compositions of $n$ is $F_n.$
\end{proof}

In the paper \cite{c3} the compositions without part equals $2$ are considered. We have derived a closed formula for the number of such compositions with $k$ parts. From the formula all results from Theorem 4 to Theorem 9, in \cite{c3}, may be derived.
\begin{prp} Let $n,k$ be positive integers such that $k\leq n.$ If $c_k(n)$ is the number of compositions of $n$ with $k$ parts,  all different of $2,$ then
\[c_n(n)=1,\;c_{n-1}(n)=0,\;c_k(n)=\sum_{i=1}^{\lfloor \frac{n-k}{2}\rfloor}{k\choose i}{n-k-i-1\choose i-1},\;(k<n-1).\]
\end{prp}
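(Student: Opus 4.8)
The plan is to apply Proposition~\ref{pp2} together with the generating-function computation used in the previous propositions, specialized to the sequence with $r_i=1$ for $i\neq 2$ and $r_2=0$. First I would record the boundary cases: $c_n(n)$ counts the unique composition $(1,1,\ldots,1)$, so $c_n(n)=1$; and $c_{n-1}(n)$ would require a composition of $n$ into $n-1$ parts none equal to $2$, forcing $n-2$ ones and a single part equal to $2$, which is excluded, so $c_{n-1}(n)=0$. For $k<n-1$, by Proposition~\ref{pp2} we have $c_k(n)=C^{(\mathbf r)}(n,k)={\mathbf r,k\choose n-k}$, which is the coefficient of $x^{n-k}$ in $g(x,k)$.

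Next I would identify $g(x,k)$ explicitly. Since $r_i=1$ for all $i\geq 0$ except $r_2=0$, we have
\[
\sum_{i=0}^\infty r_i x^i=\frac{1}{1-x}-x^2,
\]
so $g(x,k)=\bigl(\tfrac{1}{1-x}-x^2\bigr)^k$. I would rewrite this as
\[
g(x,k)=\frac{\bigl(1-x^2(1-x)\bigr)^k}{(1-x)^k}=\frac{\bigl(1-x^2+x^3\bigr)^k}{(1-x)^k}.
\]
The task is then to extract $[x^{n-k}]g(x,k)$. I would expand the numerator by the binomial theorem: $(1-x^2+x^3)^k=\sum_j \binom{k}{j}(-1)^j x^{2j}(1-x)^{j}$ — or, more cleanly, treat $(1 - x^2 + x^3)^k = (1 - x^2(1-x))^k = \sum_{i\ge 0}\binom{k}{i}(-1)^i x^{2i}(1-x)^i$. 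Multiplying by $(1-x)^{-k}$ gives $g(x,k)=\sum_{i\ge 0}\binom{k}{i}(-1)^i x^{2i}(1-x)^{i-k}$, and then $[x^{n-k}]$ picks out, for each $i$, the coefficient $[x^{n-k-2i}](1-x)^{i-k}=\binom{k-i+(n-k-2i)-1}{n-k-2i}(-1)^{n-k-2i}$ up to sign bookkeeping with the identity $[x^m](1-x)^{-N}=\binom{N+m-1}{m}$ valid for $N\ge 1$.

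The main obstacle I anticipate is the sign arithmetic and reindexing needed to turn the alternating sum $\sum_i \binom{k}{i}(-1)^i\binom{n-i-k-1}{n-k-2i}$ (or its equivalent) into the stated positive form $\sum_{i=1}^{\lfloor (n-k)/2\rfloor}\binom{k}{i}\binom{n-k-i-1}{i-1}$; in particular one must check that the $i=0$ term contributes appropriately (it gives $\binom{n-k-1}{n-k}$, which vanishes for $k<n$) and that the summation range truncates exactly at $\lfloor(n-k)/2\rfloor$ because the lower binomial coefficient $\binom{n-k-i-1}{\,\cdot\,}$ becomes zero beyond that. A cleaner route, which I would try first, is combinatorial: a composition of $n$ into $k$ parts none equal to $2$ is obtained from the all-ones composition by choosing a subset of $i$ of the $k$ positions to be ``enlarged'' (each such part becomes some value $\ge 3$, since value $2$ is forbidden and value $1$ is the default) — write part $j$ in the chosen set as $1+(1+a_j)$ with $a_j\ge 1$... actually as a part $\ge 3$, i.e. $3+b_j$ with $b_j\ge 0$; the total excess over the all-ones composition is $n-k$, distributed as $2i+\sum b_j=n-k$ among the $i$ chosen parts, giving $\binom{k}{i}$ for the choice of positions and $\binom{(n-k-2i)+i-1}{i-1}=\binom{n-k-i-1}{i-1}$ weak compositions of $n-k-2i$ into $i$ nonnegative parts. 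Summing over $i\ge 1$ with $2i\le n-k$ yields exactly the claimed formula. I would present this bijective argument as the proof, using the generating-function identity above only as a consistency check.
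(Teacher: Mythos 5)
Your designated proof---the direct bijection---is correct and is a genuinely different route from the paper's. The paper works through generating functions: it writes the per-part series as $1+x^2+x^3+\cdots=1+\tfrac{x^2}{1-x}$, expands $\bigl(1+\tfrac{x^2}{1-x}\bigr)^k$ by the binomial theorem to get $\sum_i\binom{k}{i}\binom{j+i-1}{j}x^{2i+j}$ with all terms positive, and then invokes Proposition \ref{pp2} to read off $c_k(n)={\mathbf r,k\choose n-k}$. Your combinatorial argument (choose the $i$ positions whose parts are $\geq 3$, write each as $3+b_j$, and count the weak compositions of $n-k-2i$ into $i$ parts, giving $\binom{k}{i}\binom{n-k-i-1}{i-1}$) is exactly the combinatorial shadow of that expansion; it is more elementary, handles the boundary cases and the truncation at $\lfloor(n-k)/2\rfloor$ transparently, and avoids the paper's machinery altogether.

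Two remarks on your generating-function ``consistency check,'' which as written would not check out. First, there is an indexing slip: since $C^{(\mathbf r)}(n,1)=r_{n-1}$, the part equal to $2$ corresponds to $r_1$, not $r_2$, so excluding $2$ means the base series is $1+x^2+x^3+\cdots=\tfrac{1}{1-x}-x$, whereas your series $\tfrac{1}{1-x}-x^2=1+x+x^3+\cdots$ counts compositions with no part equal to $3$. Second, even with the correct series, expanding $\bigl(1-x(1-x)\bigr)^k(1-x)^{-k}$ produces the alternating sum whose sign bookkeeping you anticipated as the main obstacle; the paper's grouping $1+\tfrac{x^2}{1-x}$ (equivalently, your own bijection) sidesteps this entirely, so if you want the analytic version you should adopt that grouping rather than fight the alternating sum.
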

\begin{proof}
Equation (\ref{oz}), in this case, take the form:
\[\left(1+x^2+x^3+\cdots\right)^k=\sum_{n=0}^\infty {\mathbf r,k\choose n}x^n.\]
Using the binomial theorem and (\ref{gf1}) we obtain
\[1+\sum_{i=1}^k\sum_{j=0}^\infty{k\choose i}{j+i-1\choose j}x^{2i+j}=\sum_{n=0}^\infty {\mathbf r,k\choose n}x^n.\]
It follows that
\[{\mathbf r,k\choose 0}=1,{\mathbf r,k\choose 1}=0,\;{\mathbf r,k\choose n}=\sum_{2i+j=n}{k\choose i}{j+i-1\choose j},(k>1).\]
The propositions now follows from Proposition \ref{pp2}.
\end{proof}
\begin{cor}[Theorem 9, \cite{c3}] The number $C_k(k+8)$ of $k+8$ into $k$ parts, all of which are different from $2$, is
\[c_k(k+8)=k+5{k\choose 2}+6{k\choose 3}+{k\choose 4}.\]
\end{cor}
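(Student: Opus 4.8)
The plan is to specialize the previous proposition to the case $k$ arbitrary and $n = k+8$, and to extract the coefficient of $x^{n-k} = x^8$ from the generating function $g(x,k) = (1+x^2+x^3+\cdots)^k$. By Proposition \ref{pp2} we know $c_k(k+8) = C^{(\mathbf r)}(k+8,k) = {\mathbf r,k\choose 8}$, so the whole task reduces to computing $[x^8]\,(1+x^2+x^3+\cdots)^k$.

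First I would use the closed form for the coefficients already displayed in the proof of the previous proposition, namely
\[
{\mathbf r,k\choose n} = \sum_{2i+j=n}{k\choose i}{j+i-1\choose j},\quad (n\ge 1,\ k>1),
\]
and set $n=8$. This gives a finite sum over $i$ with $2i\le 8$, i.e. $i\in\{0,1,2,3,4\}$, with $j = 8-2i$ in each term:
\[
{\mathbf r,k\choose 8} = \sum_{i=0}^{4}{k\choose i}{8-2i+i-1\choose 8-2i} = \sum_{i=0}^{4}{k\choose i}{7-i\choose 8-2i}.
\]
The $i=0$ term is ${k\choose 0}{7\choose 8} = 0$, so the sum effectively runs from $i=1$ to $i=4$. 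Next I would evaluate each inner binomial coefficient ${7-i\choose 8-2i}$ as a concrete number: for $i=1$ it is ${6\choose 6}=1$; for $i=2$ it is ${5\choose 4}=5$; for $i=3$ it is ${4\choose 2}=6$; for $i=4$ it is ${3\choose 0}=1$. Substituting these back yields
\[
c_k(k+8) = {k\choose 1} + 5{k\choose 2} + 6{k\choose 3} + {k\choose 4} = k + 5{k\choose 2} + 6{k\choose 3} + {k\choose 4},
\]
which is exactly the claimed formula.

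This argument is essentially a bookkeeping computation, so there is no serious obstacle; the only thing to watch is the indexing in the binomial coefficient ${j+i-1\choose j}$ with $j=8-2i$, which for small $i$ can have "top" smaller than "bottom" (as in the $i=0$ case) and must be read as zero. One could alternatively present the proof more self-containedly by expanding $(1+x^2+x^3+\cdots)^k = (1 + x^2/(1-x))^k$ via the binomial theorem, writing it as $\sum_{i=0}^k {k\choose i} x^{2i}(1-x)^{-i}$ and picking off $[x^8]$ from each term using $[x^{8-2i}](1-x)^{-i} = {8-2i+i-1\choose i-1} = {7-i\choose i-1}$; one checks ${7-i\choose i-1}$ equals $1,5,6,1$ for $i=1,2,3,4$ and $0$ for $i=0$ and $i\ge 5$, recovering the same four terms. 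I would include whichever of these two computations is shorter in the final writeup, and I expect the first (direct specialization of the previous proposition) to be the cleaner choice.
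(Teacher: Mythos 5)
Your proposal is correct and is essentially the paper's own argument: the paper likewise just specializes the preceding proposition to $n=k+8$, obtaining $c_k(k+8)=\sum_{i=1}^{4}{k\choose i}{7-i\choose i-1}$ and evaluating the four terms as $1,5,6,1$. Your binomials ${7-i\choose 8-2i}$ are the same numbers via the symmetry ${7-i\choose 8-2i}={7-i\choose i-1}$, so the only difference is that you retrace the coefficient-extraction step inside the proposition's proof rather than quoting its final formula directly.
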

\begin{proof} In this case we have 
\[c_k(k+8)=\sum_{i=1}^{4}{k\choose i}{8-i-1\choose i-1},\] and the corollary is true.
\end{proof}

Similarly, we obtain the formula for the compositions without parts equal $m,$ for any $m.$
\begin{prp} Let $n,k$ be positive integer such that $k\leq n,$ let $m$ be a nonnegative integer, and $b_{m}=0,\;b_i=1,\;(i\not=m).$
 \begin{equation}\label{hub1}C^{(\mathbf r)}(n,k)=\sum_{i=0}^k\sum_{i_0+i_1+\cdots+i_{m-1}=k-i}{k\choose i}{k-i\choose i_0,i_1,\ldots,i_{m-1}}{N\choose i-1},\end{equation}
where $N=n-k-ri-1-i_1-\cdots-(m-1)i_{m-1}.$
\end{prp}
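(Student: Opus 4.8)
The plan is to mimic the proof of the previous proposition about compositions avoiding the part $2$, but now tracking all the parts equal to $1$ separately from the parts equal to $m$ (which are forbidden) and from the parts strictly greater than $1$ and different from $m$. Starting from equation (\ref{oz}), in this setting the generating function is
\[
g(x,k) = \left(\sum_{i\neq m, \, i\geq 0} x^i\right)^k = \left(\frac{1}{1-x} - x^m\right)^k,
\]
but it is more convenient to split the series as $1 + (x^2 + x^3 + \cdots - x^m) + x$ is not quite the right grouping; instead I would write the base series as $x + S(x)$ where $x$ collects the part $1$ and $S(x) = 1 + x^2 + \cdots + x^{m-1} + x^{m+1} + x^{m+2} + \cdots$ collects everything else. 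Actually, re-reading the claimed formula, the triple sum with a multinomial coefficient $\binom{k-i}{i_0,\ldots,i_{m-1}}$ and a remaining binomial $\binom{N}{i-1}$ suggests the intended decomposition is: $i$ parts equal to $1$, then $i_0,\ldots,i_{m-1}$ counting how many parts fall into certain residue/size classes among the "small" admissible parts, and the $\binom{N}{i-1}$ absorbing the large parts via the stars-and-bars identity (\ref{gf1}).

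Concretely, the steps I would carry out are: first, apply the multinomial theorem to expand a suitable $k$-fold power, separating the contribution of the forbidden value $m$ by writing $\frac{1}{1-x} - x^m$ and then expanding $\left(\frac{1}{1-x}-x^m\right)^k$ by the binomial theorem as $\sum_{i=0}^k \binom{k}{i} (-1)^{?} x^{mi}(1-x)^{-(k-i)}$ — wait, the sign and the role of $x^m$ need care, so instead I prefer the additive decomposition of the base series into (part $=1$) $\oplus$ (parts $2,\ldots,m-1$) $\oplus$ (parts $\geq m+1$), giving a trinomial-type expansion. Second, I would use the binomial theorem twice (or a multinomial expansion) to pull out: the number $i$ of parts equal to $1$, the choices $i_0,\ldots,i_{m-1}$ among the finitely many "medium" parts (with the constraint $i_0 + i_1 + \cdots + i_{m-1} = k - i$ recorded by the multinomial coefficient), and the generating function $(1-x)^{-(i-?)}$ for the remaining large parts, whose coefficient is a single binomial by (\ref{gf1}). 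Third, I would read off the coefficient of $x^{n-k}$ in $g(x,k)$, match the exponent bookkeeping to identify $N = n - k - mi - 1 - i_1 - \cdots - (m-1)i_{m-1}$ (checking the paper's typo "$ri$" should read "$mi$"), and invoke Proposition \ref{pp2} to conclude $C^{(\mathbf r)}(n,k) = \binom{\mathbf r, k}{n-k}$.

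The main obstacle I anticipate is purely bookkeeping: correctly setting up the multi-index decomposition so that the exponent of $x$ contributed by each class of parts sums to exactly $n-k$ after the shift in (\ref{t1}), and making sure the "leftover" large-part generating function is a nonnegative power of $(1-x)^{-1}$ so that its coefficient is the clean binomial $\binom{N}{i-1}$ appearing in (\ref{hub1}). In particular one must verify that the $\binom{N}{i-1}$ (rather than $\binom{N}{\text{something else}}$) is forced by how many "slots" receive large parts, and handle the degenerate cases where $i = 0$ or where $N < 0$ (in which case the corresponding term vanishes). I do not expect any genuinely new idea beyond what already appears in the proof of the $m=2$ case; the content is an exercise in the multinomial theorem combined with the stars-and-bars identity (\ref{gf1}) and Proposition \ref{pp2}.
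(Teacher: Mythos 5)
Your toolkit is the right one---expand the $k$-th power of the gapped geometric series, use (\ref{gf1}) for the infinite tail, read off the coefficient of $x^{n-k}$, and finish with Proposition \ref{pp2}---but the decomposition you finally commit to does not produce formula (\ref{hub1}), and the internal inconsistency you yourself flag is never resolved. In (\ref{hub1}) the outer index $i$ is \emph{not} the number of parts equal to $1$: it is the number of the $k$ factors that contribute from the tail block. The paper's proof splits the base series into exactly \emph{two} blocks,
\[
1+x+\cdots+x^{m-1}\qquad\text{and}\qquad x^{m+1}\left(1+x+x^{2}+\cdots\right),
\]
applies the binomial theorem to this two-term sum (giving $\binom{k}{i}$ with $i$ counting the tail-block factors), expands $\left(1+x+\cdots+x^{m-1}\right)^{k-i}$ by the multinomial theorem (this is where $i_0,\ldots,i_{m-1}$ come from, and the parts of size $1$ are counted by $i_0$, \emph{inside} the multinomial coefficient, since under the shift of Proposition \ref{pp2} part $1$ corresponds to the constant term, not to $x$), and writes $\left(1+x+\cdots\right)^{i}=\sum_{j}\binom{j+i-1}{j}x^{j}$ for the tail. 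Matching exponents $(m+1)i+j+i_1+2i_2+\cdots+(m-1)i_{m-1}=n-k$ turns $\binom{j+i-1}{j}$ into $\binom{N}{i-1}$ with $N=n-k-mi-1-i_1-\cdots-(m-1)i_{m-1}$; you are right that the printed ``$ri$'' is a typo for ``$mi$''.

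Your preferred three-block split---$\{\text{part }1\}$, $\{\text{parts }2,\ldots\}$, $\{\text{large parts}\}$---with $i$ counting the $1$'s cannot give (\ref{hub1}) directly: the multinomial coefficient in the statement runs over all $m$ classes $i_0,\ldots,i_{m-1}$ (so the $1$'s are not separated out), and the lower index $i-1$ in the last binomial forces $i$ to be the number of slots fed by the infinite tail (stars and bars into $i$ cells). Carrying out your trinomial expansion would yield a differently shaped (if equivalent after re-summation) expression, and the recombination needed to reach the stated formula is exactly the step missing from your plan. So: same machinery as the paper, but you must fix the decomposition to the two blocks above and reassign the meaning of $i$ before the exponent bookkeeping can go through.
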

\begin{proof}
We consider here the compositions without parts equal $m+1.$  The generating function for the number of such compositions with $k$ parts is
\[g(x,k)=\left[1+x+\cdots+x^{m-1}+x^{m+1}(1+x+x^2+\cdots)\right]^k,\] that is,
\[g(x,k)=\sum_{i=0}^k{k\choose i}\big(1+x+\cdots+x^{m-1}\big)^{k-i}\big(1+x+\cdots\big)^ix^{(m+1)i}.\]
Using (\ref{gf1}) and the multinomial formula yields
\[g(x,k)=\sum_{i=0}^k\sum_{j=0}^\infty\sum_{i_0+i_1+\cdots+i_{m-1}=k-i}X,\]
where
\[X={k\choose i}{k-i\choose i_0,i_1,\ldots,i_{m-1}}{j+i-1\choose j}x^{(m+1)i+j+i_1+2i_2+\cdots+(m-1)i_{m-1}}.\]
It follows that
\[{\mathbf r,k\choose n}=\sum_{i=0}^k\sum_{i_0+i_1+\cdots+i_{m-1}=k-i}{k\choose i}{k-i\choose i_0,i_1,\ldots,i_{m-1}}{N\choose i-1},\]
where $N=n-mi-1-i_1-\cdots-(m-1)i_{m-1},$
and the proposition follows.
\end{proof}
From the preceding proposition we may obtain several closed formula for the numbers of compositions considered in Section 5, \cite{c2}.
On of them is the following:
\begin{cor} Let $n,k$ be positive integers such that $k\leq n.$ Then the number $c^{(3)}_k(n)$ of the compositions of $n$ into $k$ parts with
no occurrence of $3$ is
\[c^{(3)}_k(n)=\sum_{j=0}^k\sum_{i=0}^{\min\{k-j,\lfloor\frac{n-2k+j}{2}\rfloor\}}{k\choose i}{k-i\choose j}{n-2k+j-i-1\choose i-1}.\]
\end{cor}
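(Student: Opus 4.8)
The plan is to obtain the corollary by specialising the preceding proposition. There, the hypothesis $b_m=0$, $b_i=1$ for $i\neq m$, counts the compositions whose parts avoid the value $m+1$; so to forbid the part $3$ we set $m=2$, and then $C^{(\mathbf r)}(n,k)=c^{(3)}_k(n)$ while the formula (\ref{hub1}) applies verbatim with $m=2$.

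First I would specialise (\ref{hub1}) to $m=2$. Its inner sum then ranges over nonnegative pairs $(i_0,i_1)$ with $i_0+i_1=k-i$, so the multinomial coefficient reduces to the binomial $\binom{k-i}{i_0}$, and $N=n-k-2i-1-i_1$. Eliminating $i_1$ via $i_1=k-i-i_0$ and renaming $i_0=j$ turns $N$ into $n-2k+j-i-1$ and the coefficient into $\binom{k-i}{j}$ --- precisely the summand in the statement.

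It remains to pin down the ranges. The index $j=i_0$ runs from $0$ to $k$; for fixed $j$, the requirement $i_1=k-i-j\ge 0$ forces $i\le k-j$, and, just as in the proof of the preceding proposition, a term contributes only when the exponent of the geometric factor $(1-x)^{-i}$ is nonnegative, i.e.\ $n-2k-2i+j\ge 0$, which is $i\le\lfloor\frac{n-2k+j}{2}\rfloor$. Combining these bounds gives $0\le i\le\min\{k-j,\lfloor\frac{n-2k+j}{2}\rfloor\}$, and the corollary follows.

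I expect the only delicate point to be this range bookkeeping, and in particular the behaviour of $\binom{N}{i-1}$ when $i=0$, which corresponds to compositions all of whose parts lie in $\{1,2\}$: that case contributes $\binom{k}{n-k}$, to be supplied through the convention $\binom{-1}{-1}=1$ or else noted to vanish once $n>2k$. As a sanity check, the formula also has a direct combinatorial reading --- partition the $k$ ordered positions into $i$ parts that are $\ge 4$, $j$ parts equal to $1$, and $k-i-j$ parts equal to $2$; the large parts carry a total excess $n-2k-2i+j$ above their minimum $4$, distributable among the $i$ ordered slots in $\binom{n-2k-i+j-1}{i-1}$ ways, and $\sum\binom{k}{i}\binom{k-i}{j}\binom{n-2k-i+j-1}{i-1}$ over the admissible $i,j$ reproduces the stated expression.
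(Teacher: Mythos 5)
Your proof is correct and follows essentially the same route as the paper: specialize (\ref{hub1}) to $m=2$, reduce the multinomial to $\binom{k-i}{j}$, eliminate $i_1$ so that $N=n-2k+j-i-1$, and interchange the order of summation, the upper limit on $i$ coming from $i_1\ge 0$ together with the nonnegativity of the geometric exponent (equivalently $N\ge i-1$). Your additional remarks --- the direct combinatorial count and the $\binom{-1}{-1}=1$ convention needed so the $i=0$ term yields $\binom{k}{n-k}$ --- are sound and address a boundary point the paper leaves implicit.
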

\begin{proof} This is the case when $m=2.$ Equation (\ref{hub1}) becomes
\[c^{(3)}_k(n)=\sum_{i=0}^k\sum_{i_0+i_1=k-i}{k\choose i}{k-i\choose i_0,i_1}{n-k-2i-1-i_1\choose i-1},\] that is,
\[c^{(3)}_k(n)=\sum_{i=0}^k\sum_{j=0}^{k-i}{k\choose i}{k-i\choose j}{n-2k+j-i-1\choose i-1},\]
 if the conditions $n-2k+j-i-1\geq i-1$ is fulfilled.  Changing the order of the summation and using the preceding condition we conclude that the corollary is true.
\end{proof}
\section{Some results for generalized compositions}
It is proved in \cite{mil3}  that in the case when each $r_i$ is a
nonnegative integer, then $C^{(\mathbf r)}(n,k)$ is the number of
the generalized compositions of $n$ with $k$ parts. Also,
$C^{(\mathbf r)}(n)$ is the number of all generalized compositions
of $n.$ In the paper \cite{g1}, the author considered the compositions when
there are two types of one, and one type of all other natural
numbers. But, no a formula for the number of such
compositions with a fixed number of parts is derived. We start
this section with such a formula, but in a more general case when there are any number of ones.
\begin{prp} Let $n,k$ be nonnegative integers such that $k\leq n,$  let $m$ be a nonnegative integer, and let  $r_0=1+m,\;r_i=1,\;(i>0).$ Then,
\[C^{(\mathbf r)}(n,n)=(1+m)^k,\;C^{(\mathbf r)}(n,k)=\sum_{i=1}^k{k\choose i}{n-k+i-1\choose i-1}m^{k-i},\;(k<n).\]
\end{prp}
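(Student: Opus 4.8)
The plan is to compute the generating function $g(x,k) = \left(\sum_{i=0}^\infty r_i x^i\right)^k$ explicitly for the sequence $r_0 = 1+m$, $r_i = 1$ for $i>0$, and then extract the coefficient of $x^{n-k}$, invoking Proposition \ref{pp2} to identify that coefficient with $C^{(\mathbf r)}(n,k)$. First I would write $\sum_{i=0}^\infty r_i x^i = m + \sum_{i=0}^\infty x^i = m + \frac{1}{1-x}$, so that
\[
g(x,k) = \left(m + \frac{1}{1-x}\right)^k = \sum_{i=0}^k \binom{k}{i} m^{k-i} (1-x)^{-i}.
\]
The term $i=0$ contributes the constant $m^k$, which matches the claimed value $C^{(\mathbf r)}(k,k) = (1+m)^k$ only after we remember the $n=k$ case is handled separately; in fact for $n=k$ the coefficient of $x^0$ in $g(x,k)$ is $\sum_{i=0}^k \binom{k}{i} m^{k-i} = (1+m)^k$, which gives the first asserted formula directly.

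Next, for $k < n$, I would use the standard expansion $(1-x)^{-i} = \sum_{j=0}^\infty \binom{j+i-1}{j} x^j = \sum_{j=0}^\infty \binom{j+i-1}{i-1} x^j$ for $i \geq 1$. Collecting the coefficient of $x^{n-k}$ in $g(x,k)$, and noting that the $i=0$ term contributes nothing to positive powers of $x$, gives
\[
{\mathbf r,k\choose n-k} = \sum_{i=1}^k \binom{k}{i} m^{k-i} \binom{(n-k)+i-1}{i-1}.
\]
By Proposition \ref{pp2}, the left-hand side equals $C^{(\mathbf r)}(n,k)$, which is exactly the second claimed formula. So the argument is essentially a one-line binomial-theorem computation followed by the geometric series expansion.

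I do not anticipate a serious obstacle here; the only mildly delicate point is bookkeeping around the boundary case $n=k$ versus $n>k$, since $\binom{j+i-1}{i-1}$ with $j=0$ equals $1$, so one must check that restricting the sum to $i \geq 1$ when $k < n$ is legitimate (it is, because the $i=0$ summand is the constant $m^k$ and contributes only to $x^0$). It is also worth confirming that the formula for $k<n$ is consistent with the general coefficient formula from Proposition \ref{pp1}, item 1, in the degenerate case $m=0$: then only the $i=k$ term survives, giving $\binom{n-1}{k-1} = \binom{(n-k)+k-1}{k-1}$, which is indeed ${\mathbf r,k\choose n-k}$ for the all-ones sequence, as it should be. With these checks in place the proposition follows.
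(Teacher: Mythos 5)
Your proposal is correct and follows essentially the same route as the paper: expand $\bigl(m+(1+x+x^2+\cdots)\bigr)^k$ by the binomial theorem, apply the negative binomial (geometric series power) expansion to each $(1-x)^{-i}$ with $i\geq 1$, extract the coefficient of $x^{n-k}$, and identify it with $C^{(\mathbf r)}(n,k)$ via Proposition \ref{pp2}. Your handling of the boundary case, noting that the constant coefficient is $\sum_{i=0}^k\binom{k}{i}m^{k-i}=(1+m)^k$, is in fact slightly cleaner than the paper's (whose statement has an evident typo, writing $C^{(\mathbf r)}(n,n)=(1+m)^k$ for what should be the $n=k$ case), and your $m=0$ consistency check is a nice extra.
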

\begin{proof} In this case we have
\[g(k,x)=\left(m+1+x+x^2+\cdots\right)^k.\] Using binomial formula we obtain
\[g(k,x)=\sum_{i=0}^k{k\choose i}m^{k-i}\left(1+x+x^2+\cdots\right)^i.\]
Equation xx implies
\[g(k,x)=m^k+\sum_{i=1}^k\sum_{j=0}^\infty{k\choose i}{j+i-1\choose j}m^{k-i}x^j.\]
It follows that
\[{\mathbf r,k\choose 0}=(1+m)^k,\;{\mathbf r,k\choose n}=\sum_{i=1}^k{k\choose i}{n+i-1\choose i-1}m^{k-i},\;(n\geq 1),\]
and the proposition is true.

In particular, if $m=1$, the in \cite{g1}  is proved that $F_{2n+1}$ is the number of all compositions of this kind. Thus,
\begin{cor} The following identity is true
\[F_{2n+1}=2^n+\sum_{i=1}^{n-1}{k\choose i}{n-k-i-1\choose i-1},\] where $F_{2n+1}$ is the Fibonacci number.
\end{cor}

\end{proof}
\begin{prp} Let $n,k,m$ be positive integer such that $k\leq n,$ and let $r_i={m\choose i},\;(i=0,1,\ldots).$ Then,
 \[C^{(\mathbf r)}(n,k)={mk\choose n-k}.\]
\end{prp}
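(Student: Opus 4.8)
The plan is to reduce everything to a single generating-function identity via Proposition~\ref{pp2}. That proposition tells us $C^{(\mathbf r)}(n,k)={\mathbf r,k\choose n-k}$, and by~(\ref{oz}) the quantity ${\mathbf r,k\choose n-k}$ is precisely the coefficient of $x^{n-k}$ in $\left(\sum_{i=0}^\infty r_i x^i\right)^k$. So the only real work is to identify this series and compute its $k$-th power.

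First I would note that, since $r_i={m\choose i}$ and $m$ is a nonnegative integer, we have $r_i=0$ for every $i>m$, so $\sum_{i=0}^\infty r_i x^i$ is in fact the polynomial $\sum_{i=0}^m {m\choose i}x^i$, which by the binomial theorem equals $(1+x)^m$. Consequently $\left(\sum_{i=0}^\infty r_i x^i\right)^k=(1+x)^{mk}$. Applying the binomial theorem a second time, the coefficient of $x^{n-k}$ in $(1+x)^{mk}$ is ${mk\choose n-k}$. Hence ${\mathbf r,k\choose n-k}={mk\choose n-k}$, and Proposition~\ref{pp2} gives $C^{(\mathbf r)}(n,k)={mk\choose n-k}$, as claimed.

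I do not expect any genuine obstacle here; the only point requiring a word of care is that the collapse $\sum_i {m\choose i}x^i=(1+x)^m$ uses that $m$ is a nonnegative integer (guaranteed by hypothesis), so that the object in play is a polynomial and the computation is a purely formal polynomial identity valid over any commutative ring. As a consistency check one can observe that the case $m=1$ recovers $C^{(\mathbf r)}(n,k)={k\choose n-k}$, which agrees with the earlier corollary counting compositions of $n$ into $k$ parts each equal to $1$ or $2$ (there $r_0=r_1=1$, all other $r_i=0$).
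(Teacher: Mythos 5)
Your proposal is correct and follows essentially the same route as the paper: identify $\sum_i {m\choose i}x^i=(1+x)^m$ by the binomial theorem, so the $k$-th power is $(1+x)^{mk}$, and read off the coefficient of $x^{n-k}$ via Proposition~\ref{pp2}. Your write-up merely makes explicit the coefficient extraction and the role of $m$ being a nonnegative integer, which the paper leaves implicit.
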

\begin{proof} We have
\[g(k,x)=\left(\sum_{i=0}^m{m\choose i}x^i\right)^k.\] Using the binomial formula implies
\[g(k,x)=(1+x)^{mk},\] and the proposition is true.
\end{proof}
In the next result we show that when the $b$'s are pyramidal numbers then the number of compositions is a pyramidal number also.
\begin{prp} Let $n,k,m$ be positive integer such that $k\leq n,$ and let $r_i={i+m-1\choose i},\;(i=0,1,\ldots).$
Then,
 \[C^{(\mathbf r)}(n,k)={n-k+mk-1\choose n-k}.\]
\end{prp}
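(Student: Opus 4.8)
The plan is to proceed exactly as in the two preceding propositions: identify the generating function $g(x,k)$ attached to the sequence $r_i = \binom{i+m-1}{i}$, recognize it in closed form, read off the coefficient of $x^{n-k}$, and then invoke Proposition~\ref{pp2} to convert that coefficient into $C^{(\mathbf r)}(n,k)$. The only ingredient beyond bookkeeping is the classical negative-binomial identity
\[
\sum_{i=0}^{\infty}\binom{i+m-1}{i}x^i=\frac{1}{(1-x)^{m}},
\]
which is precisely the $r_i\equiv 1$ case of Proposition~\ref{pp1} repackaged, since $\binom{i+m-1}{i}={\mathbf 1,m\choose i}$ counts nonnegative solutions of $i_1+\cdots+i_m=i$.

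First I would write, by definition~(\ref{gf0}),
\[
g(x,k)=\left(\sum_{i=0}^{\infty}\binom{i+m-1}{i}x^i\right)^{k}
      =\left(\frac{1}{(1-x)^{m}}\right)^{k}
      =\frac{1}{(1-x)^{mk}}.
\]
Next, expanding the right-hand side again by the negative-binomial series (i.e.\ applying Proposition~\ref{pp1} with exponent $mk$ in place of $k$), we get
\[
g(x,k)=\sum_{n=0}^{\infty}\binom{n+mk-1}{n}x^n,
\]
so that ${\mathbf r,k\choose n}=\binom{n+mk-1}{n}$. Finally, by Proposition~\ref{pp2} we have $C^{(\mathbf r)}(n,k)={\mathbf r,k\choose n-k}$, and substituting $n-k$ for $n$ in the displayed coefficient yields
\[
C^{(\mathbf r)}(n,k)=\binom{(n-k)+mk-1}{n-k}=\binom{n-k+mk-1}{n-k},
\]
which is the claimed formula.

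There is essentially no hard step here; the proof is a direct parallel of the two propositions immediately before it, with $(1+x)$ replaced by $(1-x)^{-1}$ in the base and the binomial theorem replaced by its negative-exponent analogue. The one point worth a sentence of care is justifying that raising the power series $\sum \binom{i+m-1}{i}x^i$ to the $k$-th power commutes with its closed form $(1-x)^{-m}$ — i.e.\ that $\big((1-x)^{-m}\big)^k=(1-x)^{-mk}$ as formal power series — but this is immediate from the multiplicativity of the formal-power-series expansion and the fact that both sides have constant term $1$. The identification of ${\mathbf r,k\choose n-k}$ with the pyramidal (simplicial) number $\binom{n-k+mk-1}{n-k}$ then justifies the remark preceding the statement.
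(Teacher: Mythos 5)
Your proof is correct and follows essentially the same route as the paper: both identify the generating function of $r_i={i+m-1\choose i}$ as the $m$-th power of the geometric series (equation (\ref{gf1})), combine exponents to get the expansion with exponent $mk$, and then apply Proposition \ref{pp2} to read off $C^{(\mathbf r)}(n,k)={n-k+mk-1\choose n-k}$. The only cosmetic difference is that you write the series in the closed form $(1-x)^{-m}$ while the paper keeps it as $\left(\sum_{i\ge 0}x^i\right)^m$.
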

\begin{proof} From (\ref{gf1}) follows
\[\left[\sum_{n=0}^\infty {n+k-1\choose n}x^n\right]^m=\left(\sum_{i=0}^\infty x^i\right)^{km},\]
and the proposition is true according to (\ref{oz}).
\end{proof}
In the case $m=2$ we have $r_i=i+1,\;(i=0,1,\ldots),$ so that,
 \[C^{(\mathbf r)}(n,k)={n+k-1\choose 2k-1},\] which is the result of Theorem 1, \cite{ag}.

\begin{prp} Let $n,k$ be positive integers such that $k\leq n,$ and let $r_i=(i+1)^2,\;(i=0,1,\ldots).$ Then
\[C^{(\mathbf r)}(n,k)=\sum_{i=0}^{n-k}{k\choose i}{n-k+2i-1\choose 3i-1}.\]
\end{prp}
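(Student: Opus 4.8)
The plan is to follow the pattern of the preceding propositions and turn the claim into a single coefficient extraction. By Proposition~\ref{pp2}, $C^{(\mathbf r)}(n,k)={\mathbf r,k\choose n-k}$, so it is enough to find the coefficient of $x^{n-k}$ in $g(x,k)=\left(\sum_{i=0}^{\infty}(i+1)^{2}x^{i}\right)^{k}$. First I would put the base series in closed form. Using $(i+1)^{2}={i+2\choose 2}+{i+1\choose 2}$ together with the identity $\sum_{i\ge 0}{i+2\choose 2}x^{i}=(1-x)^{-3}$, that is, the case $k=3$ of (\ref{gf1}), one gets $\sum_{i\ge 0}(i+1)^{2}x^{i}=\dfrac{1+x}{(1-x)^{3}}$, hence $g(x,k)=(1+x)^{k}(1-x)^{-3k}$.

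Next I would expand $g(x,k)$ so that the summation index $i$ appearing in the statement is produced, each term carrying a factor ${k\choose i}$ and a binomial factor whose lower entry is $3i-1$. The occurrence of $3i-1$ signals that the expansion should be organized around the power $(1-x)^{-3i}$ coming from a single part of the ``active'' type rather than around the global power $(1-x)^{-3k}$: one rewrites the base series as $1+x\cdot\dfrac{p(x)}{(1-x)^{3}}$ for a suitable polynomial $p$, applies the binomial theorem to obtain $g(x,k)=\sum_{i=0}^{k}{k\choose i}\,x^{i}p(x)^{i}(1-x)^{-3i}$, and then, again by (\ref{gf1}), reads the coefficient of $x^{n-k}$ in the $i$-th summand as a term built on ${n-k+2i-1\choose 3i-1}$. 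Collecting the contributions, extending the summation range to $0\le i\le n-k$ (which only adds terms in which ${k\choose i}$ vanishes), and handling the boundary $n=k$ via the term $i=0$ under the convention ${-1\choose -1}=1$, one arrives at $C^{(\mathbf r)}(n,k)=\sum_{i=0}^{n-k}{k\choose i}{n-k+2i-1\choose 3i-1}$.

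The main obstacle is exactly this second step: identifying the right decomposition $\dfrac{1+x}{(1-x)^{3}}=1+x\cdot\dfrac{p(x)}{(1-x)^{3}}$ and controlling the contribution of $p(x)^{i}$ to the coefficient of $x^{n-k}$, so that each $i$-th summand collapses to the single binomial coefficient ${n-k+2i-1\choose 3i-1}$, together with pinning down the exact range of $i$ and the low-order cases $n=k$ and $n=k+1$. Everything else --- the reduction to ${\mathbf r,k\choose n-k}$ via Proposition~\ref{pp2} and the expansion of negative powers of $1-x$ via (\ref{gf1}) --- is routine.
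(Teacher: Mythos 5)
Your first two steps coincide with the paper's: reduce to the coefficient of $x^{n-k}$ in $g(x,k)$ via Proposition \ref{pp2}, and use the closed form $\sum_{i\geq 0}(i+1)^2x^i=\frac{1+x}{(1-x)^3}$, so that $g(x,k)=(1+x)^k(1-x)^{-3k}$. But the step you yourself flag as the ``main obstacle'' is exactly where the proof is missing, and it cannot be carried out in the form you describe. The decomposition you would need is $\frac{1+x}{(1-x)^3}=1+\frac{x\,p(x)}{(1-x)^3}$ with $p(x)=4-3x+x^2$. Since $p\not\equiv 1$, the coefficient of $x^{n-k}$ in ${k\choose i}x^i p(x)^i(1-x)^{-3i}$ is a further sum over the expansion of $p(x)^i$ and does not collapse to the single term ${k\choose i}{n-k+2i-1\choose 3i-1}$; that collapse would happen only if the base series were $1+x(1-x)^{-3}$, whose coefficient sequence is $1,1,3,6,10,\ldots$, not $(i+1)^2$. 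So the central computation is absent, and the way you plan to perform it fails.

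The reason you could not close the argument is that the identity to be proved is not correct as printed: for $n=2$, $k=1$ the left side is $C^{(\mathbf r)}(2,1)=r_1=4$, while the right side is ${1\choose 0}{0\choose -1}+{1\choose 1}{2\choose 2}=1$. The paper's own proof takes the direct route you abandoned: expand $(1+x)^k$ by the binomial theorem against $(1-x)^{-3k}$ via (\ref{gf1}). What that expansion actually yields is $C^{(\mathbf r)}(n,k)=\sum_{i}{k\choose i}{n-k-i+3k-1\choose 3k-1}$, with lower index $3k-1$ independent of the summation index; the paper's final line, which converts this into binomials with lower index $3i-1$, does not follow from its own displayed expansion either. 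If you carry out the direct expansion you get the correct double-binomial formula above; no rewriting of the base series as $1+x\,p(x)(1-x)^{-3}$ will produce the stated formula for the sequence $r_i=(i+1)^2$.
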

\begin{proof}It is a well-known that the generating function $f(x)$  of the sequence $1,2^2,3^2,\ldots$  is
\[f(x)=(1+x)(1+x+x^2+\cdots)^3.\]
It follows that
\[\big[f(x)\big]^k=\sum_{i=0}^\infty\sum_{j=0}^i{k\choose j}{i-j+3k-1\choose i-j}x^{i}.\]
According to (\ref{oz}) we conclude that
 \[C^{(\mathbf r)}(n,k)=\sum_{i=0}^n{k\choose i}{n+2i-1\choose 3i-1},\]
 and the proposition is proved.
\end{proof}

\end{document}